\documentclass[a4paper,12pt]{amsart}
\usepackage{graphicx,amsmath,amsfonts,amssymb,amsthm,mathrsfs}
\usepackage{geometry}
\usepackage{float}
\usepackage{hyperref}
\usepackage{setspace}
\usepackage{quiver}
\usepackage{xcolor}
\usepackage{listings}
\usepackage[utf8]{inputenc}
\lstdefinelanguage{Magma}{
  keywords={for, while, if, do, then, else, function, return},
  sensitive=true,
  morecomment=[l]{//},
  morecomment=[s]{/*}{*/},
  morestring=[b]",
}
\lstset{
  language=Magma,
  basicstyle=\ttfamily,
  keywordstyle=\bfseries\color{blue},
  commentstyle=\itshape\color{green!60!black},
  stringstyle=\color{red},
  numbers=left,
  numberstyle=\tiny\color{gray},
  breaklines=true,
  frame=single,
}
\hypersetup{
    colorlinks=true,
    linkcolor=blue,
    filecolor=magenta,      
    urlcolor=cyan,
    pdftitle={Overleaf Example},
    pdfpagemode=FullScreen,
    }

\textwidth 6.5 in
\headheight 18mm
\oddsidemargin 2.0mm
\evensidemargin 2.0mm
\topmargin -30mm
\textheight 240mm

\newcommand{\FF}{{\mathbb F}}

\def\Fq2{{\mathbb F}_{q^2}}
\def\Fp2{{\mathbb F}_{p^2}}

\def\cC{{\mathcal C}}
\def\cF{{\mathcal F}}
\def\cH{{\mathcal H}}
\def\cX{{\mathcal X}}
\def\cG{{\mathcal G}}

\def\cY{{\mathcal Y}}

\def\Fq2{{\mathbb F}_{q^2}}

\def\cY{{\mathcal Y}}

\def\cC{{\mathcal C}}
\def\cF{{\mathcal F}}
\def\cH{{\mathcal H}}
\def\cX{{\mathcal X}}

\def\cG{{\mathcal G}}
\DeclareMathOperator{\lcm}{lcm}
\DeclareMathOperator{\ord}{ord}
\numberwithin{equation}{section}
\theoremstyle{plain}
\newtheorem{thm}{Theorem}
\numberwithin{thm}{subsection}

\newtheorem{cor}[thm]{Corollary}
\newtheorem{pro}[thm]{Proposition}
\newtheorem{definition}[thm]{Definition}

\newtheorem{remark}[thm]{Remark}

\newtheorem{example}[thm]{Example}

\def\blfootnote{\xdef\@thefnmark{}\@footnotetext}
\title{Locally recoverable codes with multiple recovering sets from maximal curves}
\author{Saeed Tafazolian  and Jaap Top}
\address{Departamento de Matem\'{a}tica - Instituto de Matem\'{a}tica, Estat\'{i}stica e Computação Cient\'{i}fica
(IMECC) - Universidade Estadual de Campinas (UNICAMP), Rua S\'{e}rgio Buarque de Holanda, 651, Cidade Universit\'{a}ria,  Zeferino Vaz, Campinas, SP 13083-859, Brazil}

\address{Bernoulli Institute for Mathematics, Computer Science, and Artificial Intelligence\\
Nijenborgh~9\\9747 AG Groningen\\ the Netherlands}
\email{saeed@unicamp.br}
\email{j.top@rug.nl}
\begin{document}

\begin{abstract}
In this paper, we present a construction of locally recoverable codes (LRCs) with multiple recovery sets using algebraic curves with many rational points. 
By leveraging separable morphisms between smooth projective curves and expanding the class of curves previously considered, 
we significantly generalize and enhance the framework. 
Our approach corrects certain inaccuracies in the existing literature while extending results to a broader range of curves, 
thereby achieving better parameters and wider applicability. In addition, the constructions
presented here result in LRCs with large availability.
\end{abstract}

\maketitle
\noindent{\bf\em Keywords:} Locally recoverable codes, availability, finite field, maximal curves, covering maps, automorphisms.\\
{\bf\em MSC codes:} 94B27; 14G50; 11G20.

\section{Introduction}

The concept of locally recoverable codes (LRCs) arises from the need to enhance the reliability and efficiency of distributed storage systems. An error-correcting code \( C \subseteq \mathbb{F}_q^n \) is said to have \emph{locality} \( r \) if the value of any coordinate of a codeword can be determined by accessing at most \( r \) other coordinates. Such codes enable the local correction of a single erasure without requiring access to the entire codeword.

In distributed storage systems, LRCs with small locality \( r \) are particularly useful because they minimize the number of data nodes involved in the repair process. However, when multiple erasures occur—for example, within the same local repair set—recovery may fail. To address this, codes with \emph{availability} \( t \) have been introduced. These codes associate each coordinate with \( t\) disjoint recovery sets, ensuring robustness against multiple erasures. 

Algebraic geometry has provided powerful tools for constructing LRCs, notably, as initiated in, e.g., \cite{BTV, BHHMV, HMM, JMX, LMX, LM,  SVV}, by using algebraic curves and surfaces defined over finite fields. In \cite{JKZ} the authors  extended these ideas to codes with multiple recovery sets using rational function fields. This approach was generalized by Bartoli and others in \cite{BML} to curves of higher genus. However, several arguments and computations contain flaws, and therefore, the related claims regarding parameters of the codes they construct in some cases require adjustments.

Nevertheless, the work in \cite{BML} presents a powerful general method for constructing LRCs from algebraic curves with many rational points, utilizing group actions to produce codes with multiple recovering sets. Notably, Theorems~IV.1 and~V.1 of \cite{BML} offer a broad framework for generating such codes, potentially with more than one recovering set. However, their explicit constructions—focused mainly on certain maximal curves such as Hermitian and Norm-Trace—typically lead to codes with availability two or three.

Our approach builds upon and significantly extends this framework. We generalize the construction to a wider class of curves
(for a brief comparison, see Remark~\ref{?????}),  and we correct or refine aspects of the results in \cite{BML} (see, e.g., Remarks~3.2.3, 3.2.5, and~3.3.4). Furthermore, we present new families of LRCs with improved parameters. Notably, our method allows for the construction of codes with much higher availability—for example, availability $q + 1$ in Example~3.3.3—depending on the automorphism group and the structure of the underlying curve. These results highlight the enhanced flexibility and strength of our general framework (see also Remarks~2.2.1 and~3.2.2).

For storage system applications, it is desirable to have codes with low locality, high rate, and large minimum distance. Our approach successfully achieves these properties. Additionally, the LRCs we construct offer improved parameters, including better minimum distance bounds and higher availability compared to previous constructions. These contributions underscore the crucial role of maximal curves in designing high-performance LRCs for distributed storage systems.

The paper is organized as follows. In Section 2, we provide an overview of the necessary background on locally recoverable codes, algebraic geometry codes, maximal curves, and their automorphism groups. In Section 3, we make the
general setup described in \cite[Sections~IV and V]{BML}
more explicit by applying it to a concrete and wide class of curves. This is illustrated by presenting several constructions of locally recoverable codes with good parameter estimates.

\section{Preliminaries}

\subsection{Locally Recoverable Codes (LRCs) with Availability}
A block code is said to have \textit{locality} \(r\) if each coordinate of a codeword can be recovered by accessing at most \(r\) other coordinates. Formally, the definition of a  LRC with locality \(r\) is given below.

\begin{definition}
Let \(C \subseteq \mathbb{F}_q^n\) be a (\(q\)-ary) code. For each \(\alpha \in \mathbb{F}_q\) and \(i \in \{1, 2, \dots, n\}\), define:
\[
C(i, \alpha) := \{ \mathbf{c} = (c_1, c_2, \dots, c_n) \in C \mid c_i = \alpha \}.
\]
Given a nonempty subset \(I \subseteq \{1, 2, \dots, n\} \setminus \{i\}\), denote by \(C_I(i, \alpha)\) the projection of \(C(i, \alpha)\) in \(\FF_q^I\). The code \(C\) is called a \textit{locally recoverable code} with locality \(r\) if, for every \(i \in \{1, 2, \dots, n\}\), there exists \(I_i \subseteq \{1, 2, \dots, n\} \setminus \{i\}\) with \(1\leq |I_i| \leq r\) such that for any \(\alpha, \beta \in \mathbb{F}_q\) with \(\alpha \neq \beta\), the sets \(C_{I_i}(i, \alpha)\) and \(C_{I_i}(i, \beta)\) are disjoint.
\end{definition}

In this paper, we focus on LRCs that are linear over \(\mathbb{F}_q\). A \(q\)-ary locally recoverable code of length \(n\), dimension \(k\), minimum distance \(d\), and locality \(r\) is referred to as an \([n, k, d]_q\)-locally recoverable code with locality \(r\), or simply as an \([n, k, d; r]\)-code over \(\mathbb{F}_q\).

An important extension of LRCs is the concept of \textit{availability}, which allows a coordinate to have multiple disjoint recovery sets. A code is said to have \textit{availability} \(t > 1\) if every coordinate \(i\) of a codeword can be recovered from \(t\) disjoint recovery sets. Writing \(r_{i,1}, r_{i,2}, \dots, r_{i,t}\) for the cardinalities of these respective recovery sets, such a code is denoted by
\[
[n,k,d\, ;\,r_1,\cdots,r_t].
\]

 When $t = 1$, it was established that the minimum distance of such a code satisfies the upper bound  
\begin{equation} \label{eq:single_recovery}
    d \leq n - k - \left\lceil \frac{k}{r} \right\rceil + 2.
\end{equation}

For the general case where \( t \geq 1 \) and \( r = r_1 = \dots = r_t \), in \cite{TB} the bound in \eqref{eq:single_recovery} was extended to  
\begin{equation} \label{eq:multiple_recovery}
    d \leq n - \sum_{i=1}^{t} \left\lfloor \frac{k - 1}{r_i} \right\rfloor.
\end{equation}

In \cite[Theorem~III.2]{BML}, the authors generalize \eqref{eq:single_recovery} to  
\begin{equation} \label{eq:generalized_bound}
    d \leq n - k - \left\lceil \frac{(k - 1)t + 1}{1 +\sum_{i=1}^{t}  r_i} \right\rceil + 2.
\end{equation}

 In analogy with the definition of the singleton defect for linear codes, the authors in \cite{BML} introduce the concept of relative defect for a \([n, k, d; r_1, \dots, r_t]\)-code. This is defined using the above bound in (\ref{eq:generalized_bound})  as follows:

\[
\Delta(C) = \frac{1}{n} \left( n - k - d + 2 - \left\lceil \frac{(k - 1)t + 1}{1 +\sum_{i=1}^{t}  r_i} \right\rceil \right).
\]
Conceptually, a smaller relative defect indicates a better code.

In this work, we focus on constructing locally recoverable codes with multiple availability \(t>1\) using algebraic curves defined over \( \mathbb{F}_q \), particularly maximal curves and groups of automorphisms of such curves.

\subsection{Algebraic Geometry Codes and LRCs}\label{subsec2.6}

Algebraic geometry codes, introduced by Goppa, are constructed from the rational points of algebraic curves over finite fields. By leveraging the structure of these curves, these codes achieve long lengths and large minimum distances, often approaching the Gilbert-Varshamov bound, making them highly efficient for data transmission and storage (see \cite{vLvdG} for an introduction).

Moreover, algebraic geometry codes provide a foundation for constructing locally recoverable codes (LRCs) with excellent performance. In this subsection, we present a construction of LRCs based on a separable morphism between smooth projective curves, following a variant of the methods in \cite{BTV, BHHMV, CKMTW, BML, HMM, JKZ, JMX, BK, LMX, MP, MTT}.

Algebraic Geometry (AG) codes are a class of linear codes constructed from algebraic curves over finite fields. Let $\mathcal{X}$ be a smooth, projective, and absolutely irreducible algebraic curve defined over a finite field $\mathbb{F}_q$. The function field of $\mathcal{X}$ is denoted by $\mathbb{F}_q(\mathcal{X})$.

Consider a divisor 
\[
D = P_1 + P_2 + \cdots + P_n,
\]
where the $P_i$ are distinct $\mathbb{F}_q$-rational points on $\mathcal{X}$, and another divisor $G$ whose support is disjoint from the points $P_i$.

The AG code $C_L(D,G)$ is defined as the image of the evaluation map
\[
\begin{aligned}
\mathrm{ev} : \mathcal{L}(G) &\to \mathbb{F}_q^n, \\
f &\mapsto (f(P_1), f(P_2), \ldots, f(P_n)),
\end{aligned}
\]
where 
\[
\mathcal{L}(G) = \{ f \in \mathbb{F}_q(\mathcal{X})^\times \cup \{0\} : (f) + G \geq 0 \} \cup \{0\}
\]
is the Riemann-Roch space associated to the divisor $G$.

The parameters of the code $C_L(D,G)$ depend on the degree of the divisors and the genus $g$ of the curve $\mathcal{X}$. In particular, its dimension satisfies
\[
k = \dim \mathcal{L}(G) \geq \deg(G) - g + 1,
\]
and its minimum Hamming distance is bounded below by
\[
d \geq n - \deg(G).
\]

\begin{remark}
    \rm{Throughout this paper, by \emph{curve} we mean a smooth (non-singular), projective, and absolutely irreducible algebraic curve defined over a finite field.  It is a well-known classical fact that any (absolutely irreducible)
    variety $\mathcal{V}$ of dimension $1$ over a (perfect) field $k$ is birational over $k$ to a unique (upto isomorphisms over $k$)
    such smooth and projective curve $\cC$. As is customary, in various examples we present and describe such $\mathcal{V}$
    whereas we in fact always work with the curve $\cC$.}
\end{remark}

Let $\cX$ and $\cY$ be smooth, projective, and absolutely irreducible curves defined over $\FF_q$. 
Moreover, let \( \varphi\colon \cX \to \cY \) be 
a separable morphism of degree \( r+1 \) defined over $\FF_q$.

Let \( Q_1, Q_2, \dots, Q_s \) be points in \( \cY(\FF_q) \) that split completely in the covering \( \cX \to \cY \). This means that for each \( Q_i \), there exist \( r+1 \) points \( P_{i,0}, P_{i,1}, \dots, P_{i,r} \in \cX(\FF_q) \) such that \( \varphi(P_{i,j}) = Q_i \) for all \( j \).  
Choose a function \( z \in  \FF_q(\cX) \) such that \( e_1 = 1, e_2 = z, \dots, e_r = z^{r-1} \) are elements that are linearly independent over \( \FF_q(\cY) \), and whose poles do not include any of the points \( P_{i,j} \). Moreover, assume that for any fixed $i$ the values $z(P_{i,j})$ are pairwise distinct. Additionally, let \( f_0,f_1, f_2, \dots, f_{t-1} \in \FF_q(\cY) \) be elements that are linearly independent over \( \FF_q \), and whose poles do not include any of the points \( Q_i \).

Take $m\leq r-1$ and consider the vector space
\[
V := \left\{ \sum_{i=0}^{m} \left( \sum_{j=0}^{t-1} a_{i,j} \varphi^*(f_j) \right) z^i \mid a_{i,j} \in \FF_q \right\}.
\] 
Define the linear map
\[
e\colon V \to \FF_q^n,
\]
by
\[
f \mapsto e(f) = (f(P_1), \dots, f(P_n)),
\]
where \( \{P_1, \dots, P_n\} \) is the set of rational points in \( \cX(\FF_q) \) lying above the points \( Q_j \).

Let \( D \) be the smallest effective divisor on \( \cX \) such that \( V \subseteq \mathcal{L}(D) \), where \( \mathcal{L}(D) \) is the Riemann-Roch space associated with the divisor \( D \). Denote the degree of \( D \) by \( \delta \). 

If \( \delta < n = s(r+1) \), then \( e(V) \subseteq \FF_q^n \) is a linear code with parameters
\[
[n = s(r+1), k = (m+1)t, d \geq s(r+1) - \delta].
\]
This construction yields a locally recoverable code with locality \( m+1 \). Specifically, for each \( i = 1, 2, \dots, s \), the set
\[
H_i = \varphi^{-1}(Q_i) = \{ P_{i,j} \mid j = 0, 1, \dots, r \}
\]
forms a \textit{helper set}, which allows recovery of any point within \( H_i \) using the other points in the set, as follows. 

Assume that \( P \) is a rational point lying above \( Q_i \). For any \( f \in V \), we assert that \( f(P) \) can be determined using the values \( f(P_k) \) for \( P_k \in H_i \setminus \{P\} \).
Write \( f \) in the form
\[
f = \sum_{\ell=0}^{m} \left( \sum_{j=0}^{t-1} a_{\ell j} \varphi^*(f_j) \right) z^\ell,
\]
where the coefficients \( a_{\ell j} \) belong to \( \FF_q \). Now, define the function
\[
g = \sum_{\ell=0}^{m} b_\ell z^\ell,
\]
where \( b_\ell = \sum_{j=0}^{t-1} a_{\ell j} f_j(P) \in \FF_q \). Notice that \( f(P) = g(P) \) and \( f(P_k) = g(P_k) \) for each \( P_k \in H_i \setminus \{P\} \).
Since \( g(z) \) is a polynomial of degree at most \( m \), and the values \( g(P_k) = f(P_k) \) are known for \( k = 1, 2, \dots, r \) and moreover the values $z(P_k)$ are pairwise distinct, the polynomial \( g(z) \) can be reconstructed uniquely by Lagrange interpolation using only $m+1$ points $P_k$. Consequently, \( f(P) \) is determined as \( f(P) = g(P) \).
This shows that the code $e(V)$ has parameters
\[
[n = s(r+1), k = (m+1)t, d \geq s(r+1) - \delta\,;\,m+1].
\]
\begin{remark}\label{availability}{\rm The argument presented here in fact shows: there exist $\left\lfloor \frac{r}{m+1} \right\rfloor$ recovery sets, in other words, we have a LRC with availability $\left\lfloor \frac{r}{m+1} \right\rfloor$. Namely, in the set $H_i\setminus \{P\}$ any subset of cardinality $m+1$ is a helper set.}
\end{remark}

\subsection{Maximal Curves}

For a smooth projective curve $\cX$ of genus \( g \) defined over $\FF_q$, the number of rational points \( \#\cX(\mathbb{F}_q) \) satisfies the Hasse-Weil bound
\[
\#{\cX}(\mathbb{F}_q) \leq q + 1 + 2g\sqrt{q}.
\]

A curve that reaches this upper bound is called a \textit{maximal curve} over $\FF_q$. These curves are of particular interest in coding theory because they provide a large number of rational points, which can be used to construct algebraic geometry codes with better performance in terms of minimum distance and dimension.

In this paper, we focus mostly on the use of Kummer curves for constructing good LRCs, generalizing the methods used for elliptic curves and rational functions to higher-genus curves. This allows us to explore a broader class of codes and improve upon the limitations of existing constructions.

We will also use \cite[Theorem~3.2]{SFN}, which states the following. \begin{pro} 
    \label{max1} Let $q$ be a prime power$,$ $n\geq 2$ an 
   integer, and $f(x)\in \FF_{q^2}[x]$ a separable 
polynomial of degree $m\geq 2$ with 
$\gcd(n,m)=1$ and $\gcd(q,n)=1.$ Let 
   $\cX$ be the non-singular model over $\FF_{q^2}$ of the  curve defined 
   by $y^n=f(x)$. If $\cX$ is maximal over $\Fq2$, 
then $f(x)$ has a root in $\FF_{q^2}$ if and only if $n$ divides $q+1$. 
In this case$,$ all the roots of $f(x)$ belong to $\FF_{q^2}.$
    \end{pro}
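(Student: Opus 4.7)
The plan is to exploit the standard divisor relation arising from the equation $y^n = f(x)$ together with the fact that the $q^2$-power Frobenius $\Phi$ acts as multiplication by $-q$ on the Jacobian of a curve that is maximal over $\FF_{q^2}$. Write $f(x) = c\prod_{i=1}^{m}(x - \alpha_i)$ in $\overline{\FF}_{q^2}[x]$, let $P_i = (\alpha_i, 0) \in \cX(\overline{\FF}_{q^2})$, and let $P_\infty$ denote the point above $x = \infty$; this point is unique and $\FF_{q^2}$-rational because $\gcd(n, m) = 1$ forces total ramification there. A Riemann--Hurwitz calculation gives $g(\cX) = (m-1)(n-1)/2 \geq 1$. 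The identity $\div(x - \alpha_i) = n(P_i - P_\infty)$ shows that each divisor class $[P_i - P_\infty]$ lies in $\mathrm{Jac}(\cX)[n]$.

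The technical heart of the proof is to show that the order of $[P_i - P_\infty]$ in $\mathrm{Jac}(\cX)$ is \emph{exactly} $n$. Suppose towards contradiction that $d(P_i - P_\infty) \sim 0$ for some proper divisor $d$ of $n$, so that $\div(h) = d(P_i - P_\infty)$ for some $h \in \overline{\FF}_{q^2}(\cX)^\times$. Then $h^{n/d} = c'(x - \alpha_i)$ for a nonzero constant $c'$, so $\overline{\FF}_{q^2}(x, h)$ is the unique intermediate field of degree $n/d$ in the cyclic Kummer extension $\overline{\FF}_{q^2}(\cX)/\overline{\FF}_{q^2}(x)$; this intermediate field also equals $\overline{\FF}_{q^2}(x, y^d)$, which is generated by an $(n/d)$-th root of $f(x)$. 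By Kummer theory one has $(x - \alpha_i) \equiv f(x)^{j} \pmod{\bigl(\overline{\FF}_{q^2}(x)^\times\bigr)^{n/d}}$ for some $j$ coprime to $n/d$. Comparing valuations at the places $x = \alpha_i$ and $x = \alpha_{i'}$ for any $i' \neq i$ (which exists because $m \geq 2$) yields the incompatible congruences $j \equiv 1$ and $j \equiv 0 \pmod{n/d}$, forcing $n/d = 1$. I expect this Kummer-theoretic step to be the main obstacle.

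Once the order of $[P_i - P_\infty]$ is known to equal $n$, both directions follow cleanly. For the forward implication, suppose $\alpha \in \FF_{q^2}$ is a root of $f$; then $P = (\alpha, 0)$ and $P_\infty$ are both $\FF_{q^2}$-rational, so $\Phi$ fixes the class $[P - P_\infty]$, while maximality forces $\Phi([P - P_\infty]) = -q\,[P - P_\infty]$. Combining these gives $(q+1)[P - P_\infty] = 0$, and since the order of this class is $n$ we conclude $n \mid q+1$. For the converse, assume $n \mid q+1$. Then $\Phi$ acts on $\mathrm{Jac}(\cX)[n]$ as multiplication by $-q \equiv 1 \pmod{n}$, i.e.\ trivially. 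Writing $\alpha_{\sigma(i)} := \Phi(\alpha_i)$, we obtain $[P_{\sigma(i)} - P_\infty] = \Phi([P_i - P_\infty]) = [P_i - P_\infty]$, so $P_{\sigma(i)} \sim P_i$; since $g(\cX) \geq 1$, linear equivalence of two points forces $P_{\sigma(i)} = P_i$, hence $\alpha_i \in \FF_{q^2}$ for every $i$. This simultaneously proves the converse implication and the stronger ``all roots of $f$ lie in $\FF_{q^2}$'' conclusion.
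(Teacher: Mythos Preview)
The paper does not give its own proof of this proposition; it simply quotes the statement as \cite[Theorem~3.2]{SFN}. So there is nothing in the paper to compare against directly.

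Your argument is correct and follows what is essentially the standard route used in the cited reference: exploit that on the Jacobian of an $\FF_{q^2}$-maximal curve the $q^2$-Frobenius satisfies $\Phi+q=0$, together with the fact that each class $[P_i-P_\infty]$ has exact order~$n$. Your Kummer-theoretic verification of the latter is clean; the valuation comparison at two distinct roots $\alpha_i$ and $\alpha_{i'}$ is precisely the right obstruction, and the hypothesis $m\geq 2$ enters exactly there. Both directions of the equivalence, as well as the final ``all roots'' assertion, then follow immediately from the order computation as you describe. The only point left implicit is that $\Phi+q=0$ holds as an identity of endomorphisms of the Jacobian (not merely that $(T+q)^{2g}$ is the characteristic polynomial); this is a standard consequence of the semisimplicity of Frobenius on abelian varieties and is routinely invoked without further comment in this literature, so it is not a gap.
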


\subsection{Galois subcovering a curve}

Let \( \mathcal{X} \) be a curve and \( \cG \) a subgroup of its automorphism group. Consider the Galois quotient \( \mathcal{X}/\cG \). 
For a point \( P \in \mathcal{X} \) and its image \( Q \in \mathcal{X}/\cG \) under the natural projection, 
the orbit $\cG \cdot P:=\{\sigma (P) \mid \sigma \in \cG\}$ of \( P \) under the action of \( \cG \) equals the set of points 
in \( \mathcal{X} \) that map to \( Q \) in \( \mathcal{X}/\cG \).
Therefore, if \( Q \) completely splits in the extension (i.e., the natural projection does not ramify over $Q$), the cardinality of the orbit of \( P \) is equal to the cardinality of \( \cG \). In terms of function fields, assume $\cX$ is defined over
$\FF_q$ and write $\cF=\FF_q(\cX)$. In the situations discussed in the remainder of this paper we moreover assume
that all automorphisms in $\cG$ are defined over the base field $\FF_q$. The extension $\cF\supset \cF^{\cG}=\FF_q(\cX/\cG)$
is Galois with group $\cG$. The rational places of $\cF^{\cG}$ that split completely in $\cF$ correspond to the
orbits $\cG\cdot P$ of size $\#\cG$ consisting of rational places of $\cF$. Equivalently, these are the orbits under
the action of $\cG$ of the rational places $P$ of $\cF$
with the property $\textrm{Stab}_{\cG}(P)=\{\textit{id}\}$.


\section{Explicit Constructions of LRCs  from Certain Curves}

The LRCs described in this section obtain their
helper sets from the existence of several subgroups
of the automorphism group of the curves used
in constructing the LRC. Following the setup in \cite{BML}, the text is organized
by distinguishing how these subgroups intersect.

\subsection{Subgroups with trivial intersection}

 \begin{thm}\label{311} Let $\mathcal{F}$ be the function field of the curve over $\FF_q$ defined by the equation
\[
F(y) = A(x),
\]
where \( A(x) \) and \( F(y) \) are polynomials of degrees \( \alpha \) and \( \beta \), respectively, with \( \gcd(\alpha, \beta) = 1 \). 

Consider two disjoint subgroups \( \mathcal{H}_1 \) and \( \mathcal{H}_2 \) in $\text{\rm Aut}_{\FF_q}(\cF)$ with 
\(\# \mathcal{H}_1 = \alpha\) and $\#\mathcal{H}_2=\beta$ such that \( \mathcal{F}^{\mathcal{H}_1} = \mathbb{F}_q(y) \) and \(  \mathcal{F}^{\mathcal{H}_2}=\mathbb{F}_q(x)  \).
Assume that \( \mathcal{G} := \mathcal{H}_1 \mathcal{H}_2 \) is a group. 

Suppose that \( {Q}_1, \ldots, {Q}_s \) are rational places of \( \mathcal{F}^{\mathcal{G}} \) that split completely in \( \mathcal{F} \),
and moreover each of the functions $x$ and $y$ 
attains pairwise different values on the places over any $Q_j$. 
The number of such places of $\mathcal{F}$ over the ${Q}_j$'s equals  \( s\alpha\beta \).

For any \( t_1 \leq \beta-2 \) and \( t_2 \leq \alpha-2\) such that $\alpha t_1+\beta t_2 < s\alpha\beta$, there exists a 
\[
[s\alpha\beta, (t_1 + 1)(t_2 + 1), d\geq s\alpha\beta - \alpha t_1 - \beta t_2; t_2+1, t_1+1]
\]
LRC  over \( \mathbb{F}_q \),
obtained by evaluating a suitable $\FF_q$-subspace of $\mathcal{F}$ in
the places of $\mathcal{F}$ over $\{Q_1,\ldots,Q_s\}$.

\end{thm}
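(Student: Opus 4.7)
The plan is to realize the code by evaluating, at the rational places of $\cF$ lying over the $Q_j$, the $\FF_q$-subspace
$$V = \mathrm{span}_{\FF_q}\bigl\{\, x^i y^j : 0\le i\le t_2,\; 0\le j\le t_1 \,\bigr\},$$
and to exhibit, inside the $\cH_1$- and $\cH_2$-orbits of each evaluation place, two disjoint recovering sets of sizes $t_2+1$ and $t_1+1$. In the language of Section~2.2, this amounts to applying the construction \emph{twice} to the same curve: once to the degree-$\alpha$ covering $\cX\to \cX/\cH_1$ (whose bottom function field is $\FF_q(y)$) with $z=x$ and $m=t_2$, and once to the degree-$\beta$ covering $\cX\to \cX/\cH_2$ with $z=y$ and $m=t_1$.

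First I would dispatch the numerics. Since $\gcd(\alpha,\beta)=1$, Lagrange's theorem forces $\cH_1\cap\cH_2=\{\mathrm{id}\}$, hence $\#\cG=\alpha\beta$, so every $Q_j$ splits into $\alpha\beta$ rational places, giving $n=s\alpha\beta$. The identity $\dim_{\FF_q}V=(t_1+1)(t_2+1)$ follows because $[\cF:\FF_q(y)]=\alpha>t_2$ makes $1,x,\ldots,x^{t_2}$ linearly independent over $\FF_q(y)$, while $y$ is transcendental over $\FF_q$. For the minimum distance I would bound the pole divisor: $(x)_\infty$ and $(y)_\infty$ have degrees $[\cF:\FF_q(x)]=\beta$ and $[\cF:\FF_q(y)]=\alpha$, so $V\subseteq \mathcal{L}\bigl(t_2(x)_\infty + t_1(y)_\infty\bigr)$, an effective divisor of degree $\delta=\beta t_2+\alpha t_1$ whose support avoids the evaluation places (where $x$ and $y$ are finite, since they take $\alpha\beta$ distinct finite values on each fibre). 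The hypothesis $\alpha t_1+\beta t_2<s\alpha\beta$ then yields $\delta<n$, which gives injectivity of the evaluation map and the bound $d\ge n-\delta$.

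The crux, which I expect to be the main point requiring care, is the availability. Fix an evaluation place $P$ over some $Q_j$. Because $Q_j$ splits completely in $\cF/\cF^{\cG}$, the stabilizer $\mathrm{Stab}_{\cG}(P)$ is trivial, so $|\cH_1\cdot P|=\alpha$, $|\cH_2\cdot P|=\beta$, and the two orbits meet only at $P$: if $h_1P=h_2P$ with $h_i\in\cH_i$, then $h_2^{-1}h_1\in\mathrm{Stab}_{\cG}(P)=\{\mathrm{id}\}$, so $h_1=h_2\in\cH_1\cap\cH_2=\{\mathrm{id}\}$. On $\cH_1\cdot P$ the function $y$ is constant with value $\eta=y(P)$, so every $f=\sum a_{ij}x^iy^j\in V$ restricts there to the univariate polynomial $h(T)=\sum_i\bigl(\sum_j a_{ij}\eta^j\bigr)T^i$ of degree at most $t_2$. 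By hypothesis $x$ takes pairwise distinct values on this orbit, hence Lagrange interpolation on any $t_2+1$ places of $\cH_1\cdot P\setminus\{P\}$ reconstructs $h$---and this is possible because $t_2+1\le\alpha-1$---which gives $f(P)=h(x(P))$. Swapping the roles of $x,y$ and $\cH_1,\cH_2$ supplies a second, disjoint recovering set of size $t_1+1$, completing the verification of the claimed parameters.
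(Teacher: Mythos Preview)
Your proof is correct and follows essentially the same approach as the paper's: the paper also takes the evaluation space $V=\mathrm{span}_{\FF_q}\{x^jy^k:0\le j\le t_2,\;0\le k\le t_1\}$ (presented there as $V_1\cap V_2$), bounds $d$ via the inclusion $V\subset \mathcal{L}(mP_\infty)$ with $m=\alpha t_1+\beta t_2$ (exploiting that $\gcd(\alpha,\beta)=1$ forces a unique common pole $P_\infty$ with $(x)_\infty=\beta P_\infty$, $(y)_\infty=\alpha P_\infty$, so your divisor $t_2(x)_\infty+t_1(y)_\infty$ coincides with theirs), and obtains the two disjoint recovering sets from the $\cH_i$-orbits exactly as you do. The only cosmetic difference is that the paper phrases the disjointness of the two orbits as ``the action of $\cH_1\cH_2$ on the orbit of $P$ is faithful,'' whereas you spell out the stabilizer argument.
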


\begin{proof}
Let \( P_\infty \) be the unique pole of \( x \) and  \( y \in \mathcal{F} \). Then, \( (y)_\infty = \alpha P_\infty \) and \( (x)_\infty = \beta P_\infty \).  For \( t_i \geq 0 \), \( i = 1, 2 \), consider the \( \mathbb{F}_q \)-vector spaces

\[
V_1 := \left\{
    \sum_{k=0}^{\alpha-2} \left( \sum_{j=0}^{t_1} a_{k,j} y^{j} \right) x^k : a_{k,j} \in \mathbb{F}_{q}
\right\}
\]
and
\[
V_2 := \left\{
    \sum_{k=0}^{\beta-2} \left( \sum_{j=0}^{t_2} a_{k,j} x^j \right) y^k : a_{k,j} \in \mathbb{F}_{q}
\right\}.
\]
 Note that the inequalities $t_1 \leq \beta-2$ and $t_2 \leq \alpha-2$ imply \( \dim_{\mathbb{F}_{q}} V_1 = (\alpha-1)(t_1 + 1) \) and
 \( \dim_{\mathbb{F}_{q}} V_2 = (\beta-1)(t_2 + 1) \).

The inequalities also imply that the vector space
\[
V := V_1 \cap V_2 = \left\{
    \sum_{k=0}^{t_1} \left( \sum_{j=0}^{t_2} a_{k,j} x^j \right) y^{k} : a_{k,j} \in \mathbb{F}_{q}
\right\}
\]
and
\[
\dim_{\mathbb{F}_{q}} V = (t_1 + 1)(t_2 + 1).
\]
Now, let \( m := \alpha t_1 + \beta t_2 \). Then we obtain the inclusion \( V \subset L(m P_\infty) \).

Consider, for $n:=s\alpha\beta$, the linear map
\[
e\colon V \to \mathbb{F}_q^n,
\]
given by
\[
f \mapsto e(f) = (f(P_1), \dots, f(P_n)),
\]
where \( \{P_1, \dots, P_n\} \) is the set of places of $\cF$ above the ${Q}_j$'s.

The linear code \( e(V) \) is contained in the algebraic geometry code \( C_L\left( \sum_{i=1}^{n} P_i,  m P_\infty \right) \). Observe that by assumption $n > t_1\alpha+t_2\beta=m$ which shows
that $e$ is injective, so $\dim_{\FF_q}e(V)=(t_1+1)(t_2+1)$.

Then, the minimum distance of the code \( e(V) \) is greater than or equal to the minimum distance of the linear code \( C_L\left( \sum_{i=1}^{n} P_i, m P_\infty \right) \). In the case at hand this yields \(  d \geq n-m \). 

Considering the general construction in Subsection \ref{subsec2.6}, we know that an error in the coordinate at \( P=P_j \) can be repaired by any subset of cardinality \( t_2+1,t_1+1 \) for \( i = 1, 2 \), respectively of the two sets 
\[
R_i = \cH_i\cdot P\setminus \{P\}.
\]
Finally, we have \( {\mathcal{H}_1\cdot P}\, \cap \,{\mathcal{H}_2\cdot P} = \{ P \} \) since by assumption the action of $\cH_1\cH_2$ on the orbit of $P$ is faithful. 
\end{proof}
\begin{remark}{\rm
   As in Remark~\ref{availability}, the proof
   presented here in fact constructs LRCs with
   availability $\lfloor\frac{\alpha-1}{t_2+1}\rfloor + \lfloor\frac{\beta-1}{t_1+1}\rfloor$. An analogous observation holds
   for all subsequent  results in this section.
}\end{remark}
 
\begin{cor}
 Let $\mathcal{X}$ be a curve over $\mathbb{F}_q$ defined by the equation \( y^\beta = A(x) \), where $\beta\geq 2$ is a divisor of $q-1$, and $A(x)$ is an additive and separable polynomial of degree $\alpha$ that splits in $\FF_q[x]$. Assume $n=\#\mathcal{X}(\mathbb{F}_q) - \alpha - 1>0$. For any $t_1 \leq \beta - 2$ and $t_2 \leq \alpha - 2$, there exists a recoverable code $\mathcal{C}$ over $\mathbb{F}_q$ with parameters  
\[
\big[n, \, (t_1 + 1)(t_2 + 1), \, d\geq n - \alpha t_1 - \beta t_2; \, t_2+1, \, t_1+1 \big].
\]

\end{cor}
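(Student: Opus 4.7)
The plan is to verify the hypotheses of Theorem~\ref{311} for the curve $\cX : y^\beta = A(x)$ with a natural choice of automorphism subgroups. The polynomial $F(y) = y^\beta$ has degree $\beta$ and $A(x)$ has degree $\alpha$; since $A$ is a nonzero additive polynomial, $\alpha$ is a power of the characteristic of $\FF_q$, whereas $\beta \mid q-1$ is coprime to the characteristic, yielding $\gcd(\alpha,\beta)=1$. For $\cH_2$, I use that $\beta \mid q - 1$ to fix a primitive $\beta$-th root of unity $\zeta \in \FF_q$, and take $\cH_2 = \langle \sigma_\zeta \rangle$ where $\sigma_\zeta : x \mapsto x,\ y \mapsto \zeta y$; then $\#\cH_2 = \beta$ and $\cF^{\cH_2} = \FF_q(x)$ by a degree count. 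For $\cH_1$, let $R \subset \FF_q$ denote the root set of $A$; by additivity and separability together with the splitting hypothesis, $R$ is an $\FF_p$-subspace of $\FF_q$ of order $\alpha$. For each $a \in R$ the identity $A(x+a) = A(x) + A(a) = A(x)$ shows that $\tau_a : x \mapsto x+a,\ y \mapsto y$ extends to an $\FF_q$-automorphism of $\cF$, and $\cH_1 := \{\tau_a : a \in R\}$ is elementary abelian of order $\alpha$ with $\cF^{\cH_1} = \FF_q(y)$. The groups $\cH_1$ and $\cH_2$ commute and intersect trivially (every nontrivial element of $\cH_2$ moves $y$ while $\cH_1$ fixes $y$), so $\cG := \cH_1\cH_2 = \cH_1 \times \cH_2$ is a group of order $\alpha\beta$.

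Next I decompose $\cX(\FF_q)$ under the $\cG$-action. Since $\gcd(\alpha,\beta)=1$, there is a unique point at infinity $P_\infty$, which is $\cG$-fixed. The rational points with $y=0$ are precisely the $\alpha$ points $(a,0)$ with $a \in R$; they form a single $\cG$-orbit, transitive under $\cH_1$ and pointwise fixed by $\cH_2$. For any other rational point $(x_0, y_0)$ one has $y_0 \ne 0$, so $\sigma_\zeta^i$ fixes it only if $i = 0$, and $\tau_a$ fixes it only if $a = 0$; hence $\cG$ acts freely there. Consequently the $n = \#\cX(\FF_q) - \alpha - 1$ remaining rational points split into $s := n/(\alpha\beta)$ free $\cG$-orbits, so in particular $\alpha\beta \mid n$. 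On each such orbit, $x$ takes the $\alpha$ pairwise distinct values $\{x_0 + a : a \in R\}$ on $\cH_1 \cdot P$ while $y$ takes the $\beta$ pairwise distinct values $\{\zeta^i y_0 : 0 \le i < \beta\}$ on $\cH_2 \cdot P$, which is the separation required in the statement of Theorem~\ref{311}.

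All hypotheses of Theorem~\ref{311} are now in place, and its conclusion immediately yields the announced LRC. The only step not reducing to routine bookkeeping is the orbit analysis: verifying that the non-free rational points on $\cX$ are exactly $P_\infty$ together with the $\alpha$ ramification points above $y=0$, so that the remaining $n$ rational points distribute cleanly into $s$ orbits of full size $\alpha\beta$ and the length $n = s\alpha\beta$ of the resulting code matches the form given by Theorem~\ref{311}.
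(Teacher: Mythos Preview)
Your proof is correct and follows essentially the same route as the paper's: you define the same two subgroups $\cH_1=\{\tau_a\}$ and $\cH_2=\langle\sigma_\zeta\rangle$, verify they satisfy the hypotheses of Theorem~\ref{311}, and carry out the same orbit count showing that the rational points outside $\{P_\infty\}\cup\{(a,0):A(a)=0\}$ fall into free $\cG$-orbits of size $\alpha\beta$. Your write-up is in fact slightly more explicit than the paper's in two places---you spell out why $\gcd(\alpha,\beta)=1$ (via $\alpha$ being a $p$-power and $\beta\mid q-1$), and you check the separation condition for $x$ on $\cH_1$-orbits and $y$ on $\cH_2$-orbits directly---but these are refinements of the same argument, not a different approach.
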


\begin{proof}
   Consider the function field \( \mathcal{F} = \mathbb{F}_{q}(x, y) \) with
\[
y^{\beta} =A(x).
\]
Since $\beta \mid q-1$ and all roots of $A(x)$ belong to $\FF_q$, we have the following two subgroups of \( \mathbb{F}_{q}\)-automorphisms of \( \mathcal{F} \), where a $\lambda\in\FF_q^\times$ of order $\beta$ is fixed:
\[
\mathcal{H}_1 := \left\{ (x, y) \mapsto (x + c, y) \;|\; c \in \mathbb{F}_{q}, \; A(c) = 0 \right\},
\]
\[
\mathcal{H}_2 := \left\{ (x, y) \mapsto (x, \lambda^i y) \;|\; i \in\mathbb{Z} \right\}.
\] 
Observe that \( \#\mathcal{H}_1 = \alpha \), \( \#\mathcal{H}_2 = \beta \), \( \mathcal{F}^{\mathcal{H}_1} = \mathbb{F}_{q}(y) \), and
\( \mathcal{F}^{\mathcal{H}_2} = \mathbb{F}_{q}(x) \). The groups \( \mathcal{H}_1 \) and \( \mathcal{H}_2 \) have trivial intersection and commute, thus \( \cG = \mathcal{H}_1 \mathcal{H}_2 \cong \mathcal{H}_1 \times \mathcal{H}_2 \) is a group of cardinality $\alpha \beta$ and \( \mathcal{F}^\cG = \mathbb{F}_{q}( y^{\beta})=\mathbb{F}_{q}(A(x)) \subseteq \mathcal{F}^{\mathcal{H}_i} \) for \( i = 1, 2 \).

Let \( P_{\infty} \) be the unique pole of \( x \) and \( y \). Then, we have
\[
(y)_{\infty} = \alpha P_{\infty} \quad \text{and} \quad (x)_{\infty} = \beta P_{\infty}.
\]
Considering the function field extension \( \mathbb{F}_q(x, y) / \mathbb{F}_q(x) \), we have:  
\[
\# \cX(\mathbb{F}_q) = 1 + \alpha + m \alpha \beta,
\]
for some integer \(m>0\).   
Indeed, \(\mathcal{X}\) has one point at infinity, \(\alpha\) rational points corresponding to the zeros of \(A(x)\), and all other rational points yield a $\cG$-orbit of cardinality $\alpha\beta$.  
Therefore, in the extension \( \mathcal{F} / \mathbb{F}_{q}(y^{\beta}) \), the number of places that are totally split is given by  
\[
\#\cX(\FF_q)-1 - \alpha
\]
which is $>0$ by assumption.
\end{proof}

\begin{cor} \label{C4.1}
Let $\mathcal{X}$ be a maximal curve over $\mathbb{F}_{q^2}$ defined by the equation \( y^\beta = A(x) \), where \( A(x) \) is an additive and separable polynomial of degree $\alpha$. For any $t_1 \leq \beta - 2$ and $t_2 \leq \alpha - 2$, there exists a LRC  over $\mathbb{F}_{q^2}$ with parameters  
\[
\big[n, \, (t_1 + 1)(t_2 + 1), \, d\geq n - \alpha t_1 - \beta t_2 \,;\, t_2+1, \,t_1+1 \big]
\]
where $n=q^2 + (\alpha - 1)(\beta - 1)q - \alpha$.
\end{cor}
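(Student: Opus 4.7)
The plan is to reduce the corollary to Theorem~\ref{311} applied over $\FF_{q^2}$ with $F(y)=y^\beta$ and the given $A$, essentially upgrading the previous corollary by replacing the divisibility $\beta\mid q-1$ with a divisibility condition extracted from maximality. The first step is arithmetic: because $A$ is additive, $x=0$ is a root of $A$, so Proposition~\ref{max1} applied to the maximal curve $\cX$ forces $\beta\mid q+1$ and places every root of $A$ inside $\FF_{q^2}$. This simultaneously furnishes a primitive $\beta$-th root of unity $\lambda\in\FF_{q^2}^\times$ and the coprimality $\gcd(\alpha,\beta)=1$, since $\alpha$ is a power of the characteristic $p$ of $\FF_q$ while $\beta\mid q+1$ is coprime to $q$.

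With these ingredients I would build $\cH_1,\cH_2\subset \text{Aut}_{\FF_{q^2}}(\cF)$ exactly as in the previous corollary: translations $(x,y)\mapsto(x+c,y)$ with $A(c)=0$ (a group of order $\alpha$, by additivity of $A$) and scalings $(x,y)\mapsto(x,\lambda^i y)$ (a cyclic group of order $\beta$). Both are defined over $\FF_{q^2}$ by the first step, they commute, intersect trivially, and have fixed fields $\FF_{q^2}(y)$ and $\FF_{q^2}(x)$ respectively, so $\cG:=\cH_1\cH_2$ is a group of order $\alpha\beta$ satisfying the structural hypotheses of Theorem~\ref{311}.

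The main computational step is identifying the totally split places and verifying the numerical hypothesis of Theorem~\ref{311}. The genus of $y^\beta=A(x)$ with $A$ separable of degree $\alpha$ coprime to $\beta$ is $g=(\alpha-1)(\beta-1)/2$, so maximality gives $\#\cX(\FF_{q^2})=q^2+1+(\alpha-1)(\beta-1)q$. The unique point at infinity and the $\alpha$ points with $y=0$ have nontrivial $\cG$-stabilizer, while every other rational point has trivial stabilizer (its $\cG$-orbit thus has full size $\alpha\beta$). Removing the $\alpha+1$ exceptional points therefore leaves exactly
\[
n=q^2+(\alpha-1)(\beta-1)q-\alpha
\]
rational points, partitioned into $s=n/(\alpha\beta)$ totally split $\cG$-orbits. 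The inequality $\alpha t_1+\beta t_2<n$ is easily checked from the bounds $t_1\le\beta-2$, $t_2\le\alpha-2$ (it follows from $\alpha t_1+\beta t_2\le 2\alpha\beta-2\alpha-2\beta$ compared against the dominant term $(\alpha-1)(\beta-1)q$), and freeness of each $\cH_i$-action on these orbits ensures that $x$ and $y$ take pairwise distinct values on the fibers, as required. Theorem~\ref{311} then yields the claimed LRC parameters.

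The delicate point I would treat most carefully is the orbit analysis: verifying that outside of the $\alpha+1$ exceptional rational points no further $\cG$-stabilizers appear, so that the remaining points really do cleanly split into orbits of size $\alpha\beta$ and the count $s\alpha\beta=n$ matches the number dictated by the Hasse--Weil equality. Once that freeness is pinned down, the invocation of Theorem~\ref{311} is mechanical.
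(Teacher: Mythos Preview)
Your proposal is correct and follows essentially the same route as the paper: invoke Proposition~\ref{max1} to obtain $\beta\mid q+1$ and that all roots of $A$ lie in $\FF_{q^2}$, compute $\#\cX(\FF_{q^2})$ via maximality and the genus formula, subtract the $\alpha+1$ ramified rational points, and feed the resulting $n$ into the framework of Theorem~\ref{311} (equivalently, the preceding corollary). The paper's own argument is terser---it tacitly recycles the group setup from the previous corollary and omits the checks of $\gcd(\alpha,\beta)=1$ and of the inequality $\alpha t_1+\beta t_2<n$ that you make explicit---but the substance is the same.
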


\begin{proof}
Since $\cX$ is maximal over $\mathbb{F}_{q^2}$, from Proposition \ref{max1} we know that $\beta$ divides $q + 1$ and all roots of \( A(x) \) belong to \(\mathbb{F}_{q^2}\), and 
\[
\#\cX(\FF_{q^2})=1 + q^2 + 2gq
\]
where \( g = (\alpha - 1)(\beta - 1)/2 \) is the genus of \(\cX\).  

 Therefore, in the extension \( \FF_{q^2}(\cX) \) over \( \mathbb{F}_{q^2}(y^\beta) \), the number of totally split rational places equals 
\[
q^2 + (\alpha - 1)(\beta - 1)q - \alpha.
\]

\end{proof}

\begin{remark}\label{?????}
\rm{The corollaries above generalize Propositions IV.4, IV.8, and IV.9 in \cite{BML}, which pertain to the Hermitian function field, the generalized Hermitian curve, and the Norm-Trace curve, respectively. Furthermore, this result can be applied to a broad class of maximal curves, e.g., by starting from examples presented in \cite{GF, AT, ABQ,SFN}. 
Additionally, similar arguments can be given to construct LRCs from other Kummer maximal curves.

}
\end{remark}

\begin{example}
\rm
This is an example illustrating Theorem~\ref{311}. Consider the Kummer curve \(\mathcal{X}\) over \(\mathbb{F}_{q^2}\) defined by the equation
\[
y^{\beta} = x^{\alpha} + 1,
\]
where \(\alpha, \beta \geq 2\) are integers dividing \(q + 1\). Then, by \cite{FT}, \(\mathcal{X}\) is maximal over \(\mathbb{F}_{q^2}\). We further assume that \(\gcd(\alpha, \beta) = 1\).

Fix elements \(\gamma, \lambda \in \mathbb{F}_{q^2}^*\) of orders \(\alpha\) and \(\beta\), respectively. Consider two subgroups of the automorphism group of \(\mathcal{X}\), defined by:
\[
\mathcal{H}_1 := \left\{ \sigma_i : (u, v) \mapsto (\gamma^i u, v) \;\middle|\; i = 0, \dots, \alpha - 1 \right\},
\]
\[
\mathcal{H}_2 := \left\{ \tau_j : (u, v) \mapsto (u, \lambda^j v) \;\middle|\; j = 0, \dots, \beta - 1 \right\},
\]
and let \(\mathcal{G} := \mathcal{H}_1 \mathcal{H}_2\), which is a group.

Since \(\mathcal{X}\) is maximal over \(\mathbb{F}_{q^2}\), we have:
\[
\#\mathcal{X}(\mathbb{F}_{q^2}) = 1 + q^2 + (\alpha - 1)(\beta - 1)q.
\]

Let \(P = (u, v) \in \mathcal{X}(\FF_{q^2})\) be a rational point that is neither at infinity nor satisfies \(u = 0\) or \(v = 0\). The orbit of \(P\) under the action of \(\mathcal{G}\) is long (i.e., has size \(\#\mathcal{G} = \alpha \beta\)). Consequently, there are
\[
n := \#\mathcal{X}(\FF_{q^2}) - 1 - \alpha - \beta = q^2 + (\alpha - 1)(\beta - 1)q - \alpha - \beta
\]
rational points that split completely in the extension \(\mathcal{F} / \mathcal{F}^{\mathcal{G}}\), where \(\mathcal{F}\) is the function field of \(\mathcal{X}\).

Hence, for any integers \(t_1 \leq \beta - 2\) and \(t_2 \leq \alpha - 2\), this construction yields an explicit LRC over \(\mathbb{F}_{q^2}\) with parameters:
\[
\left[ n, \; (t_1 + 1)(t_2 + 1), \; d \geq n - \alpha t_1 - \beta t_2 \, ; \; t_2 + 1, \; t_1 + 1 \right].
\]
\end{example}

\vspace{\baselineskip}
The next result and examples illustrate extensions of
the presented results to situations with
$\cF^{\cH_1}\neq \FF_q(y)$ or $\cF^{\cH_2}\neq \FF_q(x)$. The proof requires only minor adaptations
compared to the earlier ones. 

\begin{pro} \label{P4.}
   Let $\mathcal{X}$ be a maximal curve  over $\FF_{q^2}$ defined by the equation $y^{\beta}=A(x)$, where $A(x)$ is an additive and separable polynomial with degree $\alpha$. Suppose $u\geq 2$ is a divisor of $\beta$. For any $t_1 \leq u-2$ and $t_2 \leq \alpha-2 $, there exists a 

\[
[n, (t_1+1)(t_2+1), d\geq n-\alpha t_1-\beta t_2\,;\; t_2+1, t_1+1]
\]
LRC over $\FF_{q^2}$, where  $n=q^2 + (\alpha - 1)(\beta - 1)q - \alpha$.
\end{pro}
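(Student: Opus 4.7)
The plan is to adapt the proof of Corollary~\ref{C4.1} by replacing the full cyclic group of $y$-scalings of order $\beta$ with its unique subgroup of order $u$. Write $\mathcal{F}=\FF_{q^2}(x,y)$ and fix $\lambda\in\FF_{q^2}^{\times}$ of order $u$; such $\lambda$ exists because $u\mid\beta\mid q+1\mid q^2-1$. I would take $\mathcal{H}_1$ to be the additive translations $(x,y)\mapsto(x+c,y)$ with $A(c)=0$, of order $\alpha$, and $\mathcal{H}_2=\langle(x,y)\mapsto(x,\lambda y)\rangle$, of order $u$. The divisibility $u\mid\beta$ is precisely what makes the latter preserve the defining equation $y^{\beta}=A(x)$. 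These two subgroups commute and intersect trivially, so $\mathcal{G}=\mathcal{H}_1\mathcal{H}_2$ is a group of order $\alpha u$, with $\mathcal{F}^{\mathcal{H}_1}=\FF_{q^2}(y)$ as before, whereas $\mathcal{F}^{\mathcal{H}_2}=\FF_{q^2}(x,y^u)\supsetneq\FF_{q^2}(x)$. This last discrepancy is exactly why the statement does not follow directly from Theorem~\ref{311}.

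Next I would analyze the $\mathcal{G}$-orbits on $\mathcal{X}(\FF_{q^2})$. By Proposition~\ref{max1} combined with the Hasse--Weil equality, $\#\mathcal{X}(\FF_{q^2})=1+q^2+(\alpha-1)(\beta-1)q$. The unique point at infinity is $\mathcal{G}$-fixed; the $\alpha$ affine points with $y=0$ form a single $\mathcal{H}_1$-orbit of size $\alpha$, each of them fixed by all of $\mathcal{H}_2$; and every remaining rational point $(a,b)$ has $b\neq 0$ and hence trivial $\mathcal{G}$-stabilizer, yielding a full orbit of cardinality $\alpha u$. Thus exactly $n=q^2+(\alpha-1)(\beta-1)q-\alpha$ rational points partition into $s=n/(\alpha u)$ orbits, giving $s$ places of $\mathcal{F}^{\mathcal{G}}$ that split completely in $\mathcal{F}$; the divisibility is automatic from the partition.

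For the code I would evaluate
\[
V=\mathrm{span}_{\FF_{q^2}}\{x^iy^j:0\le i\le t_2,\ 0\le j\le t_1\}
\]
at these $n$ points. Since $\alpha$ is a power of $\mathrm{char}\,\FF_q$ while $\beta\mid q+1$ is coprime to it, $\gcd(\alpha,\beta)=1$, whence $v_\infty(x)=-\beta$, $v_\infty(y)=-\alpha$, and $V\subset\mathcal{L}(mP_\infty)$ with $m=\alpha t_1+\beta t_2$. The hypotheses together with the maximality count above ensure $m<n$, so the evaluation map is injective, yielding $\dim=(t_1+1)(t_2+1)$; the standard Goppa bound then delivers $d\ge n-m$.

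The availability argument is identical to that in the proof of Theorem~\ref{311} and Remark~\ref{availability}. On the $\mathcal{H}_1$-orbit of $P$, the coordinate $y$ is constant while $x$ attains $\alpha$ distinct values, so any $f\in V$ restricts to a polynomial in $x$ of degree at most $t_2\le\alpha-2$, recoverable by Lagrange interpolation from $t_2+1$ of the remaining values. On the $\mathcal{H}_2$-orbit, $x$ is constant while $y$ attains the $u$ distinct values $\lambda^jb$, and $f$ restricts to a polynomial in $y$ of degree at most $t_1\le u-2$, recoverable from $t_1+1$ values. The two orbits meet only at $P$ since $\mathcal{H}_1\cap\mathcal{H}_2=\{\mathrm{id}\}$. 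I expect the only delicate point---and the only substantive deviation from the earlier proofs---to be confirming that the recovery argument of Subsection~\ref{subsec2.6} still applies even though $\mathcal{F}^{\mathcal{H}_2}\supsetneq\FF_{q^2}(x)$; this is fine because that argument uses only the pairwise distinctness of the values of $y$ on the $\mathcal{H}_2$-orbit, which continues to hold here.
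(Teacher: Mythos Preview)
Your proposal is correct and follows essentially the same approach as the paper: both take $\mathcal{H}_1$ to be the additive translations of order $\alpha$ and $\mathcal{H}_2$ the $y$-scalings of order $u$, observe $\mathcal{F}^{\mathcal{H}_2}=\FF_{q^2}(x,y^u)$, evaluate $V=\mathrm{span}\{x^iy^j:0\le i\le t_2,\ 0\le j\le t_1\}\subset\mathcal{L}((\alpha t_1+\beta t_2)P_\infty)$ at the $n$ points in long $\mathcal{G}$-orbits, and recover via Lagrange interpolation along the two orbit types. Your write-up is in fact more explicit than the paper's in analyzing the orbit structure and in flagging (and resolving) the point that $\mathcal{F}^{\mathcal{H}_2}\supsetneq\FF_{q^2}(x)$ prevents a direct appeal to Theorem~\ref{311}.
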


\begin{proof}
   Consider the function field \( \mathcal{F} = \mathbb{F}_{q^2}(x, y) \) with
\[
y^{\beta} =A(x).
\]
Since $\mathcal{X}$ is maximal over $\FF_{q^2}$,  from Proposition  \ref{max1} we know that $\beta$ divides $q+1$ and all roots of 
$A(x)$ are rational. 
Therefore we have the following two subgroups of \( \mathbb{F}_{q^2}\)-automorphisms of \( \mathcal{F} \):

\[
\mathcal{H}_1 := \left\{ (x, y) \mapsto (x + c, y) \;|\; c \in \mathbb{F}_{q^2}, \; A(c) = 0 \right\},
\]
\[
\mathcal{H}_2 := \left\{ (x, y) \mapsto (x, \lambda^i y) \;|\; i = 0, \dots, u - 1 \text{ and } \lambda \in \mathbb{F}_{q^2}^* \text{ of order } u \geq 2 \right\}.
\] 
It is easy to see that \( \#\cH_1 = \alpha \), \( \#\cH_2 = u \), \( \mathcal{F}^{\cH_1} = \mathbb{F}_{q^2}(y) \), and
\( \mathcal{F}^{\cH_2} = \mathbb{F}_{q^2}(x,y^u) \). The groups \( \cH_1 \) and \( \cH_2 \) have trivial intersection and commute, thus \( \cG = \cH_1 \cH_2 \cong \cH_1 \times \cH_2 \) is a group of cardinality $\alpha u$ and \( \mathcal{F}^G = \mathbb{F}_{q^2}( y^{u}) \subseteq \mathcal{F}^{\cH_i} \) for \( i = 1, 2 \). In the extension \( \mathcal{F} | \mathbb{F}_{q^2}(y^u) \), we have  \( q^2 + (\alpha - 1)(\beta - 1)q-\alpha\) places which are totally split.

Let \( P_{\infty} \) be the unique pole of \( x \) and \( y \) in \( \mathcal{F} \). Then, we have
\[
(y)_{\infty} = \alpha P_{\infty} \quad \text{and} \quad (x)_{\infty} = \beta P_{\infty}.
\]

For \( t_i \geq 0 \), \( i = 1, 2 \), consider the \( \mathbb{F}_{q^2} \)-vector spaces

\[
V_1 := \left\{
    \sum_{k=0}^{\alpha-2} \left( \sum_{j=0}^{t_1} a_{k,j} y^{j} \right) x^k : a_{k,j} \in \mathbb{F}_{q^2}
\right\}
\]
and
\[
V_2 := \left\{
    \sum_{k=0}^{u-2} \left( \sum_{j=0}^{t_2} a_{k,j} x^j \right) y^k : a_{k,j} \in \mathbb{F}_{q^2}
\right\}.
\]

\[
V := V_1 \cap V_2 = \left\{
    \sum_{k=0}^{t_1} \left( \sum_{j=0}^{t_2} a_{k,j} x^j \right) y^{k} : a_{k,j} \in \mathbb{F}_{q^2}
\right\}.
\]
 Then
\( \dim_{\mathbb{F}_{q^2}} V = (t_1 + 1)(t_2 + 1) \). 
Writing $m:=\alpha t_1 + \beta t_2$, we get $V \subseteq L(m P_{\infty}) $.
\end{proof}

\begin{example}\label{ex3.1.8}
    \rm{Consider the Hermitian curve \( y^{65} = x^{64} + x \) over \( \mathbb{F}_{4096} \). Let \( u = 5 \), \( t_1 = 3 \), and \( t_2 = 62 \). By the above proposition, there exists a \( 4096 \)-ary LRC with parameters \([262080, 252, d \geq 257793; 63, 4]\) and relative defect \(\Delta = 0.01537\).  

For comparison, see the example in \cite[Table II]{BML}, where they provide an LRC with parameters \([262080, 64, d \geq 253952; 63, 4]\) and relative defect \(\Delta = 0.03076\). This demonstrates that our code, while maintaining the same length, achieves a significantly higher dimension and minimum distance, as well as a smaller relative defect.
}
\end{example}

\begin{example}
\rm{The Suzuki curve is a maximal curve defined over finite fields of characteristic \(2\), specifically over the field \(\mathbb{F}_{q^4}\), where \(q = 2^{2t+1}\) and \(t \geq 0\) is any integer. This has interesting properties, including a large number of \(\mathbb{F}_{q}\)-rational points relative to its genus. The Suzuki curve is described by the affine equation:
\[
y^q - y = x^{q_0}(x^q - x),
\]
where \(q_0  = 2^{t}\).  Its genus is $q_0(q-1)$. 
Matthews has constructed certain codes based on Suzuki curves; see \cite{GM}. 
Write $\mathcal{S}$ for the
function field of the Suzuki curve over $\FF_{q^4}$. Let \( P_\infty \) be the unique pole of \( x \) and  \( y \in \mathcal{S} \). Then,

\[ 
(x)_{\infty} = q P_{\infty} \quad \text{and} \quad (y)_{\infty} = (q+q_0)P_{\infty}.
\]

Define two subgroups of the automorphism  group of $\mathcal{S}$, by 
\[
\cH_1 := \left\{ (x, y) \mapsto ( x, y+\lambda) \;|\;   \lambda \in \mathbb{F}_{q}  \right\},
\]
\[
\cH_2 := \left\{ (x, y) \mapsto (\gamma x, \gamma^{q_0+1} y) \;|\;  \gamma \in \mathbb{F}_{q}^* \right\}.
\]

It is easy to see that \( \#\cH_1 = q \), \( \#\cH_2 = q-1 \), \( \mathcal{S}^{\cH_1} = \mathbb{F}_{q^4}(x) \), and
\( \mathcal{S}^{\cH_2} = \mathbb{F}_{q^4}(x^{q-1}, y^{q-1}) \). The groups \( \cH_1 \) and \( \cH_2 \) have trivial intersection, and \( \cG := \cH_1 \cH_2 \cong \cH_1 \rtimes \cH_2 \) satisfies \( \mathcal{S}^{\cG} = \mathbb{F}_{q^4}(x^{q-1}) \subseteq \mathcal{S}^{\cH_i} \) for \( i = 1, 2 \).
Let $P = (x, y)$ be a rational point on the curve such that $P$ is neither at infinity nor satisfies $x = 0$ or $y = 0$. The orbit of $P$ under the action of $\mathcal{G}$ is long. Consequently, there exist 
\[
n:=q^4 + 2q^2q_0(q-1) - 2q
\] 
rational places of $\mathcal{S}$ that are fully split over $\mathcal{S}^{\mathcal{G}}$.

Let \(0 \leq t_i, \ i = 1, 2\) and consider the $\FF_{q^4}$-vector spaces
\[
V_1 := \left\{ \sum_{k=1}^{q-1} \left( \sum_{j=0}^{t_1} a_{k,j} x^{j}  \right) y^k \mid a_{k,j} \in \mathbb{F}_{q^4} \right\},
\]
\[
V_2 := \left\{ \sum_{k=0}^{q - 3} \left( \sum_{j=0}^{t_2} a_{k,j} y^{j(q-1)} \right) x^k \mid a_{k,j} \in \mathbb{F}_{q^4} \right\}.
\]

For \(t_1  \leq q-3\), we obtain
\[
V:= V_1 \cap V_2 = \left\{  \sum_{j=0}^{t_1} a_{j} x^j y^{q-1} \mid a_{j} \in \mathbb{F}_{q^4} \right\},
\]
and \(\dim_{\mathbb{F}_{q^4}} V = t_1 + 1\). The minimum distance of the code satisfies 
\[
d \geq n - q t_1 -(q-1)(q+q_0). 
\]
For any $t_1 \leq q-3$, we  have obtained over $\FF_{q^4}$ the LRC with parameters

\[
[n, t_1+1, d\geq n- qt_1-(q-1)(q+q_0); t_1+1, 1].
\]

The example discussed here shows a number of interesting
features: first of all, the groups $\cH_1,\cH_2$ here
do not commute, but still $\cH_1\cH_2$ is a group (as follows from the observation that $\cH_1$ is normal
in the group generated by $\cH_1,\cH_2$). Note that as before, in fact we obtain an LRC with availability $1+\lfloor\frac{q-1}{t_1+1}\rfloor$. In terms of coding theory, the example
is probably not particularly useful: it can alternatively be described as the Schur-product
of the $1$-dimensional repetition code
$\textit{e}(\FF_{q^4}\cdot y^{q-1})$ and
the repetition code $\textit{e}(\sum_{j=0}^{t_1}\FF_{q^4}\cdot x^j)$.
}
\end{example}


\subsection{Subgroups with nontrivial intersection}

In the preceding subsection, we examined two subgroups whose intersection consists solely of the identity element. In this subsection, we focus on the situation where the two commuting subgroups \( \mathcal{H}_1 \) and \( \mathcal{H}_2 \) share a nontrivial intersection.

\begin{thm}\label{T4.1}
Consider the maximal curve \(\mathcal{X} / \FF_{q^2}\) defined by the equation
\[
y^{\beta} = A(x),
\]
where \(A(x)\) is an additive separable polynomial of degree \(\alpha\). 
Let \(\eta, \lambda \in \mathbb{F}^\times_{q^2}\), and define \(u := \ord(\eta)\), \(v := \ord(\lambda)\). 
Assume that \(u\) and \(v\) are divisors of \(\beta\), with \(1 < u \leq v\) and \(\gcd(u, v) = m > 1\).  
Then, for integers \(t_1 \leq (v - 2)/u\) and \(t_2 \leq \alpha - 2\), there exists an
\[
[n, (t_1+1)(t_2+1), d \geq n - \alpha u t_1 - \beta t_2; t_2+1, t_1+1]
\]
locally recoverable code over \(\FF_q\), where \(n := q^2 + (\beta - 1)(\alpha - 1)q - \alpha\).
\end{thm}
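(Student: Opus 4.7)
The approach parallels Theorem~\ref{311} and Proposition~\ref{P4.}, adapted to the new feature that the two automorphism subgroups share a common subgroup of order $m>1$.

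I would first fix the two subgroups explicitly:
\[
\mathcal{H}_1 := \big\{ (x,y)\mapsto (x+c,\,\eta^{j}y) : A(c)=0,\ 0\leq j<u \big\},
\]
of order $\alpha u$, and
\[
\mathcal{H}_2 := \big\{ (x,y)\mapsto (x,\,\lambda^{i}y) : 0\leq i<v \big\},
\]
of order $v$. Proposition~\ref{max1} applied to the maximal curve $\mathcal{X}$ gives $\beta\mid q+1$ and makes all roots of $A$ rational, so both sets really are groups of $\mathbb{F}_{q^2}$-automorphisms of $\mathcal{F}$. They commute, and their intersection consists of the maps $(x,y)\mapsto (x,\mu y)$ with $\mu\in\langle\eta\rangle\cap\langle\lambda\rangle$, the unique order-$m$ subgroup of $\mathbb{F}_{q^2}^{\times}$. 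Hence $\mathcal{G}:=\mathcal{H}_{1}\mathcal{H}_{2}$ is a group of order $\alpha u v/m$. As in Corollary~\ref{C4.1} and Proposition~\ref{P4.}, the rational points of $\mathcal{X}$ with trivial $\mathcal{G}$-stabilizer are exactly the affine ones with $y\neq 0$, and maximality yields $n=q^{2}+(\alpha-1)(\beta-1)q-\alpha$ of them.

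Next I would evaluate on these $n$ places the $\mathbb{F}_{q^2}$-vector space
\[
V:=\Big\{\sum_{k=0}^{t_{1}}\sum_{j=0}^{t_{2}}a_{k,j}\, x^{j}y^{uk} : a_{k,j}\in\mathbb{F}_{q^2}\Big\}.
\]
Using $\gcd(\alpha,\beta)=1$ together with $ut_{1}<\beta$, the pole orders $\beta j+\alpha u k$ at the unique pole $P_{\infty}$ of $x$ and $y$ are pairwise distinct on the index set, so $\dim_{\mathbb{F}_{q^2}} V=(t_{1}+1)(t_{2}+1)$ and $V\subseteq\mathcal{L}(m_{0}P_{\infty})$ with $m_{0}:=\alpha u t_{1}+\beta t_{2}$. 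Standard AG-code arguments then give injectivity of the evaluation map (assuming $m_{0}<n$) and the distance bound $d\geq n-m_{0}$. For each evaluation point $P$ the two recovery sets are produced as follows: take $R_{1}$ to be any $t_{2}+1$ of the $\alpha-1$ points $(x(P)+c,y(P))$ with $c\neq 0$ and $A(c)=0$, on which $y$ is constant so that $f\in V$ restricts to a polynomial in $x$ of degree $\leq t_{2}$, recovered at $x(P)$ by Lagrange interpolation; and take $R_{2}$ to be any $t_{1}+1$ points of $\mathcal{H}_{2}\cdot P\setminus\{P\}$ whose $y^{u}$-values are pairwise distinct, so that with $x=x(P)$ fixed, $f$ restricts to a polynomial of degree $\leq t_{1}$ in the variable $y^{u}$ and is again recovered by Lagrange interpolation. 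Disjointness is automatic, since $R_{1}\subseteq\{x\neq x(P)\}$ while $R_{2}\subseteq\{x=x(P)\}$.

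The main obstacle is the combinatorics hidden in the choice of $R_{2}$. Because of the nontrivial intersection, $y^{u}$ attains only $v/m$ distinct values on $\mathcal{H}_{2}\cdot P$, each with multiplicity $m$, so one must check that the hypothesis $t_{1}\leq (v-2)/u$ still forces $t_{1}+1\leq v/m$. A short case-split, writing $v=qu+r$ with $0\leq r<u$ and using $m=\gcd(u,v)\mid r$ together with $u\leq v$, confirms $\lfloor(v-2)/u\rfloor+1\leq v/m$ in every admissible situation. This is the one place where the precise interaction among $u$, $v$ and $m$ genuinely enters the argument, and it is also why the interpolation must be carried out in $y^{u}$ rather than in $y$ in order to reach a recovery set of the claimed size $t_{1}+1$.
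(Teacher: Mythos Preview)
Your proof is correct and follows the same strategy as the paper: the same subgroups $\mathcal{H}_1,\mathcal{H}_2$, the same evaluation space $V$ spanned by the monomials $x^{j}y^{uk}$, and recovery by Lagrange interpolation in $x$ along the additive part of $\mathcal{H}_1$ and in $y^{u}$ along $\mathcal{H}_2\cdot P$. Your explicit check that $t_1\le (v-2)/u$ forces $t_1+1\le v/m$ distinct $y^{u}$-values in $\mathcal{H}_2\cdot P$ is in fact more careful than the paper's one-line justification, which states the multiplicity of each $y^{u}$-value as ``$u$'' rather than the correct $m=\gcd(u,v)$.
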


\begin{proof}
Define two subgroups of the automorphism group of \(\mathcal{X}\)  by
\[
\cH_1 := \left\{(x, y) \mapsto \left(x + a, \eta^i y\right) \mid A(a) = 0, \ i \in\mathbb{Z}\right\},
\]
and
\[
\cH_2 := \left\{ (x, y) \mapsto \left(x, \lambda^i y\right) \mid  \ i \in\mathbb{Z} \right\},
\]
so that \(\# \cH_1 = \alpha \cdot u\), \(\# \cH_2 = v\), they commute, and
\[
\cG =\cH_1 \cH_2
\]
has order \(\frac{ \alpha \cdot u \cdot v}{m}\). The fixed fields are
\[
\mathcal{F}^{\cH_1} = \mathbb{F}_{q^2}\left(A(x), y^{u}\right)=\FF_{q^2}(y^u), \quad \mathcal{F}^{\cH_2} = \mathbb{F}_{q^2}\left(x, y^{v}\right)
\]
and
\[
\mathcal{F}^{\cG} = \mathbb{F}_{q^2}\left( y^{\lcm(u,v)}\right).
\]
For \( t_i\) as in the statement of the theorem, consider the vector spaces
\[
V_1 := \left\{ \sum_{k=0}^{\alpha - 2} \left( \sum_{j=0}^{t_1} a_{k,j} y^{j u} \right) x^k \mid a_{k,j} \in \mathbb{F}_{q^2} \right\},
\]
\[
V_2 := \left\{ \sum_{k=0}^{v - 2} \left( \sum_{j=0}^{t_2} a_{k,j} x^j \right) y^k \mid a_{k,j} \in \mathbb{F}_{q^2} \right\}.
\]
Put 
\[
V:= V_1 \cap V_2 = \left\{ \sum_{k=0}^{t_1} \left( \sum_{j=0}^{t_2} a_{k,j} x^j \right) y^{k \cdot u} \mid a_{k,j} \in \mathbb{F}_{q^2} \right\},
\]
then \(\dim_{\mathbb{F}_{q^2}} V = (t_1 + 1)(t_2 + 1)\). The minimum distance of the corresponding evaluation code satisfies \(d \geq n - \alpha u t_1-\beta t_2\). Note that only $u$ of the rational
places over a totally split $Q_j$ yield the same
$y^u$ value. Therefore the assumed inequality
$t_1\leq (v-2)/u$ implies the existence of
a recovery set of size $t_1+1$ in any $\cH_2\cdot P$. This constructs a LRC over $\FF_{q^2}$ with parameters
\[
[n, (t_1+1)(t_2+1),d\geq n-\alpha u t_1-\beta t_2; t_2+1, t_1+1 ].
\]

\end{proof}

\begin{remark}\label{rm3.2.2} {\rm
   As in Remark~\ref{availability}, the proof
   presented here in fact constructs LRCs with
   availability $\lfloor\frac{\alpha u-1}{t_2+1}\rfloor + \lfloor\frac{v-1}{t_1+1}\rfloor$.
}\end{remark}

\begin{remark}\label{rem3.2.3}
    \rm{The theorem above generalizes and corrects Proposition~V.3 in \cite{BML}, where the authors considered the special case of the generalized Hermitian curve and made several mistakes in their computations, caused by (their notation, second column of \cite[p.~6806]{BML}) an
    erroneous determination of the fixed fields $\mathcal{S}^{\mathcal{H}_1}$
    and $\mathcal{S}^{\mathcal{H}_2}$. 
    Note that moreover in their statement of the proposition, the parameter $q\cdot ord(\eta)$ should be $q\cdot ord(\eta)-1$.
    
    Similar arguments as the one proving
    our result can be applied to construct codes for other maximal curves.}
\end{remark}

\begin{example} \label{ex3.2.4} \rm{
    Consider the generalized Hermitian curve \( y^{28} = x^{3} + x \) over \( \mathbb{F}_{729} \). Let \( u = 4 \), \( v = 28 \), \( t_1 = 6 \), and \( t_2 = 1 \). By Theorem~\ref{T4.1}, there exists a \( 729 \)-ary LRC with parameters \([2184, 14, d \geq 2084; 2, 7]\) and Remark~\ref{rm3.2.2} implies that this code has availability \( 8 \).  

    For comparison, see the example in \cite[Table III]{BML}. However, their construction is incorrect since \cite[Proposition~V.3]{BML} is not valid
    (see Remark~\ref{rem3.2.3} above). Their codes provide only two recovery sets, whereas our construction achieves eight recovery sets, offering significantly improved availability.}
\end{example}

 In the following, we also correct the result mentioned in \cite[Remark V.4]{BML}.

\begin{remark}
    \rm{Consider \(\eta, \lambda \in \mathbb{F}_{q^2}^\times\), where \(u:={\ord}(\eta)\) divides \(\beta\), \({\ord}(\lambda) = \beta\), and the subgroups 
\[
\mathcal{H}_1 = \left\{(x, y) \mapsto (x + a, y) \mid A(a) = 0 \right\},
\]
\[
\mathcal{H}_2 = \left\{(x, y) \mapsto (x, \lambda^i y) \mid i\in\mathbb{Z} \right\}
\]
and 
\[
\mathcal{H}_3 := \left\{(x, y) \mapsto \left(x + a, \eta^i y\right) \mid A(a) = 0, \ i \in\mathbb{Z} \right\}
\]
 of automorphisms of the curve corresponding to
 $y^{\beta}=A(x)$ (compare Theorem~\ref{T4.1}).
 
Note that \(\cG := \mathcal{H}_1\mathcal{H}_2 = \mathcal{H}_3 \mathcal{H}_2\).
With $\alpha:=\deg\,A(x)$, for integers $t_1 \leq (\beta - 2)/u$ and $t_2 \leq \alpha - 2$,  Theorem~\ref{T4.1}  provides a  LRC over \(\mathbb{F}_{q^2}\)   with
parameters
\[
[n, (t_1+1)(t_2+1), d\geq n-\alpha u t_1-\beta t_2; t_2+1, t_1+1 ].
\]

By Corollary~\ref{C4.1}, under the conditions
$t_1\leq \beta-2$ and $t_2\leq \alpha-2$ there also exists a   LRC over \(\mathbb{F}_{q^2}\)   with
parameters
\[
[n, (t_1+1)(t_2+1),d \geq n-\alpha t_1-\beta t_2; t_2+1, t_1+1].
\]
 This shows, as is also observed in \cite[Remark~V.4]{BML}, that locally recoverable codes with the same length, dimension, but 
 possibly different minimum distances may be obtained by using different vector spaces
 of functions on the same curve.
}
\end{remark}

\subsection{LRCs  from situations with three subgroups}
The next result describes situations where three
subgroups of the automorphism group of a curve are
used for constructing LRCs.

\begin{thm}\label{331}
    Let $\mathcal{F}/\mathbb{F}_{q^2}$ be a maximal function field  defined by the equations
\begin{align*}
\left\{
\begin{array}{ll}
        z^M &= B(y) \\
        y^N &= A(x),\\
\end{array}
\right.
\end{align*}
where \( A(x) \) and \( B(y) \) are additive separable polynomials of degrees \( \alpha>1 \) and \( \beta>1 \), respectively. 

Let $r\geq 2$ be a divisor of $N$, and
let $s\geq 2$ be a divisor of $M$ such that $\gcd(r,s)=1$. 
 Take  $\eta, \lambda \in \FF^\times_{q^2}$ with ${\ord}(\eta)=r$ and ${\ord}(\lambda)=s$, such that
$B(\eta y)=\eta B(y)$. 

Let \( P_\infty \) be the unique pole  of \( x \), \( y \), and \( z \in \mathcal{F} \) and write
    \[
    (x)_{\infty} = a P_{\infty} \quad \text{and} \quad (z)_{\infty} = b P_{\infty}.
    \]
Let \( \mathcal{H}_1 \), \( \mathcal{H}_2 \) and \( \mathcal{H}_3 \) be subgroups of $\textrm{Aut}_{\FF_{q^2}}(\cF)$ given by 
     \[
    \cH_1 := \left\{ (x, y, z) \mapsto (x + c, y, z) \;|\; A(c) = 0 \right\},
    \]
    \[
    \cH_2 := \left\{ (x, y, z) \mapsto (x, y, \lambda^i z) \;|\; i \in \mathbb{Z}  \right\}
    \]
    and
    \[
    \cH_3 := \left\{ (x, y, z) \mapsto (x, \eta^{iM} y, \eta^i z) \;|\; i \in \mathbb{Z} \right\}.
    \]
Then 
    \( \#\mathcal{H}_1 = \alpha \), \( \#\mathcal{H}_2 = s \) and \( \#\mathcal{H}_3 = r \).

    It holds that \( \cG := \mathcal{H}_1 \mathcal{H}_2 \mathcal{H}_3 \) is a group. Let \( n \) be the number of rational places \( P \) of \(\cF \) such that \(  \# (\cG \cdot P) =\#\cG  \). Then, for any integers \( t_1 \leq \min \{ s, r \}-2 \) and \( t_2  \leq \alpha-2 \), there exists a LRC with parameters
    \[
    [n, (t_1 + 1)(t_2 + 1), d\geq n - bt_1-at_2; t_2 + 1, t_1 + 1, t_1 + 1].
    \]

\end{thm}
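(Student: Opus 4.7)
The plan is to follow the template of Theorem~\ref{311} and Theorem~\ref{T4.1}, producing a single space $V$ whose restriction to each orbit $\mathcal{H}_i\cdot P$ is a low-degree polynomial in the natural variable distinguishing that orbit. The first task is to verify that $\mathcal{H}_1,\mathcal{H}_2,\mathcal{H}_3$ are indeed automorphism subgroups of the claimed orders. For $\mathcal{H}_1$ this uses additivity of $A$; for $\mathcal{H}_2$, the divisibility $s\mid M$ gives $\lambda^M=1$, so $(\lambda^iz)^M=z^M=B(y)$; for $\mathcal{H}_3$ both defining relations must be preserved: $(\eta^{iM}y)^N=y^N=A(x)$ (since $r\mid N$ gives $\eta^N=1$) and $(\eta^iz)^M=B(\eta^{iM}y)$, which follows by induction from the hypothesis $B(\eta y)=\eta B(y)$. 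Next I would check that $\mathcal{G}$ is a group by noting that the three subgroups commute pairwise: $\mathcal{H}_1$ acts only on $x$ while the others fix $x$, and the actions of $\mathcal{H}_2$ and $\mathcal{H}_3$ on $(y,z)$ are both scalar multiplications which clearly commute. The coprimality $\gcd(r,s)=1$ then forces $\mathcal{H}_2\cap\mathcal{H}_3=\{\mathit{id}\}$, because any common element would require $\lambda^j=\eta^i\in\langle\lambda\rangle\cap\langle\eta\rangle=\{1\}$, and similarly $\mathcal{H}_1\cap(\mathcal{H}_2\mathcal{H}_3)=\{\mathit{id}\}$ by the induced action on $x$, so $\#\mathcal{G}=\alpha r s$.

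Following the pattern of the earlier proofs, I would introduce three auxiliary vector spaces $V_1,V_2,V_3$, each obtained by bounding the degree in one variable by the relevant orbit size minus two and the degree in the other by the appropriate $t_j$. Their intersection is the space
\[
V=\Bigl\{\sum_{j=0}^{t_2}\sum_{k=0}^{t_1}a_{j,k}\,x^jz^k:a_{j,k}\in\mathbb{F}_{q^2}\Bigr\}.
\]
Using the standard basis $\{y^iz^j\}$ of $\mathcal{F}$ over $\mathbb{F}_{q^2}(x)$ together with the bound $t_1\leq\min(r,s)-2\leq M-1$, the displayed monomials are $\mathbb{F}_{q^2}$-linearly independent, so $\dim V=(t_1+1)(t_2+1)$. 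Since $P_\infty$ is the unique pole of both $x$ and $z$, with orders $a$ and $b$ respectively, we get $V\subseteq\mathcal{L}((at_2+bt_1)P_\infty)$. Consequently $e(V)$ lies inside the AG code $C_L\bigl(\sum_{i=1}^nP_i,(at_2+bt_1)P_\infty\bigr)$, which yields the bound $d\geq n-at_2-bt_1$ and the injectivity of $e$ whenever this quantity is positive.

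The recovery argument is then a direct Lagrange interpolation: on $\mathcal{H}_1\cdot P$ the coordinates $y$ and $z$ are fixed and $x$ takes $\alpha$ distinct values, so $f|_{\mathcal{H}_1\cdot P}$ is a polynomial of degree $\leq t_2$ in $x$, recoverable from any $t_2+1$ values; on each of $\mathcal{H}_2\cdot P$ and $\mathcal{H}_3\cdot P$ the coordinate $x$ is fixed while $z$ takes $s$ (respectively $r$) distinct values $\lambda^kz(P)$ (respectively $\eta^kz(P)$), so $f$ restricts to a polynomial of degree $\leq t_1$ in $z$, again recoverable from $t_1+1$ values. Disjointness of the three recovery sets outside of $P$ follows from the $x$-coordinate separating $\mathcal{H}_1\cdot P$ from the other two, combined with $\mathcal{H}_2\cap\mathcal{H}_3=\{\mathit{id}\}$ and the hypothesis that $\mathcal{G}$ acts freely on $P$. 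I expect the main obstacle to be the bookkeeping around these pairwise trivial intersections---in particular the joint role of $\gcd(r,s)=1$ and $B(\eta y)=\eta B(y)$ in ensuring that $\mathcal{H}_3$ is a well-defined automorphism subgroup disjoint from $\mathcal{H}_2$---rather than any analytic difficulty; once the group-theoretic picture is clean, the dimension and minimum-distance estimates follow exactly as in the two-subgroup case.
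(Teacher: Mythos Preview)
Your proposal is correct and follows essentially the same route as the paper's proof: both verify that the $\mathcal{H}_i$ are commuting automorphism subgroups of the stated orders, define $V$ as the intersection of three auxiliary spaces $V_1,V_2,V_3$, embed $V\subseteq\mathcal{L}((at_2+bt_1)P_\infty)$ to read off dimension and minimum distance, and then use Lagrange interpolation on each orbit $\mathcal{H}_i\cdot P$ for the recovery sets. Your write-up is in fact more explicit than the paper's on several points (the verification that $\mathcal{H}_3$ preserves both defining equations via $B(\eta y)=\eta B(y)$, the pairwise trivial intersections giving $\#\mathcal{G}=\alpha rs$, and the disjointness of the three recovery sets), whereas the paper additionally invokes Proposition~\ref{max1} to explain why $\eta,\lambda$ of the required orders exist and records the fixed fields $\mathcal{F}^{\mathcal{H}_i}$; but these are differences of emphasis rather than of method.
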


\begin{proof} The assumption that $\cF$ is maximal
implies by Proposition~\ref{max1} applied to
the subfield defined by $y^N=A(x)$, that $N|(q+1)$
and all zeros of $A(x)$ are in $\FF_{q^2}$.
Hence $\eta$ as mentioned in the theorem exists.
Similarly, $\lambda$ exists and the three groups
$\cH_j$ are indeed in $\textrm{Aut}_{\FF_{q^2}}(\cF)$
and have order as stated. Since the group generated
by the $\cH_j$'s is abelian, indeed \( \cG = \mathcal{H}_1 \mathcal{H}_2 \mathcal{H}_3 \) is a group. Note that $x\not\in \mathcal{F}^{\mathcal{H}_1} = \mathbb{F}_{q^2}(y, z)$ and $z\not\in \mathcal{F}^{\mathcal{H}_2} = \mathbb{F}_{q^2}(x, y,z^s)$ and also $z\not\in \mathcal{F}^{\mathcal{H}_3}\supseteq \mathbb{F}_{q^2}(x,z^r)$.

For $t_i$ as in the statement of the theorem, consider the vector spaces
    \[
    V_1 := \left\{ \sum_{k=0}^{\alpha- 2} \left( \sum_{j=0}^{t_1} a_{k,j} z^j \right) x^k : a_{k,j} \in \mathbb{F}_{q^2} \right\},
    \]
    \[
    V_2 := \left\{ \sum_{k=0}^{s - 2} \left( \sum_{j=0}^{t_2} a_{k,j} x^j \right) z^k : a_{k,j} \in \mathbb{F}_{q^2}  \right\}
    \]
    and
    \[
    V_3 := \left\{ \sum_{k=0}^{r - 2} \left( \sum_{j=0}^{t_2} a_{k,j} x^j \right) z^k : a_{k,j} \in \mathbb{F}_{q^2}  \right\}.
    \]

    Then, we obtain that
    \[
    V := V_1 \cap V_2 \cap V_3 = \left\{ \sum_{k = 0}^{t_1} \sum_{j= 0}^{t_2} a_{k, j} z^{k} x^{j} : a_{k, j} \in \mathbb{F}_{q^2}  \right\}
    \]
    is contained in \( L((bt_1+at_2) P_{\infty}) \) and has dimension
    \[
    \dim_{\mathbb{F}_{q^2} } V = (t_1 + 1)(t_2 + 1).
    \]
Evaluating at the $n$ rational places \( P \) of \(\cF \) such that \(  \# \cG \cdot P =\#\cG  \) yields a LRC over \(\mathbb{F}_{q^2}\) with parameters
    \[
    [n, (t_1 + 1)(t_2 + 1), d\geq n - bt_1-at_2; t_2 + 1, t_1 + 1, t_1 + 1].
    \]
\end{proof}

\begin{remark}
{\rm
   As in Remark~\ref{availability}, the proof
   presented here in fact constructs LRCs with
   availability $\lfloor\frac{\alpha-1}{t_2+1}\rfloor + \lfloor\frac{r-1}{t_1+1}\rfloor+ \lfloor\frac{s-1}{t_1+1}\rfloor$.
}\end{remark}

\begin{example}
    \rm{
Let $\mathcal{X}/\mathbb{F}_{q^6}$ be the Giulietti-Korchmáros curve  defined by the following equation:
\begin{align*}
\left\{
\begin{array}{ll}
        z^{q^2-q+1} &= y^{q^2}-y \\
        y^{q+1} &= x^q+x.\\
\end{array}  
\right.
\end{align*}
It is well known that $\mathcal{X}$ is a maximal curve over $\mathbb{F}_{q^6}$, having precisely $1 + q^8 - q^6 + q^5$ rational points. 
 There exists a unique point $P_{\infty}$ where $x, y$ and $z$ have a pole. One obtains
    \[
    (x)_{\infty} = (q^3+1) P_{\infty} \quad \text{and} \quad (z)_{\infty} = q P_{\infty}.
    \]

Consider positive integers $r|(q+1)$ and $s|(q^2 - q + 1)$. For any $t_1 \leq \min\{r, s\}-2$ and $t_2 \leq q-2$, Theorem~\ref{331} gives a LRC over $\FF_{q^6}$ with parameters
\[
    [n, (t_1 + 1)(t_2 + 1), d\geq n - qt_1-(q^3+1)t_2; t_2 + 1, t_1 + 1, t_1 + 1]
    \]
where $n=q^8-q^6+q^5-q^3.$ The choice \( r = q+1 \), \( s = q^2 - q + 1 \), \( t_1 = q-1 \), and \( t_2 = q-2 \) provides a LRC with availability \( q+1 \).

}
\end{example}

\begin{remark}\rm{
   The theorem and example above generalize Proposition IV.6 in \cite{BML}, which focuses on the Giulietti-Korchmáros curve. While their construction yields a code with availability 3, our example provides a code with availability $q + 1$.
}
\end{remark}

\noindent{\textbf{Acknowledgment.} \rm{ The first author was partially supported by FAPESP grant No.~2023/08271-5.}}

\end{document}